\theoremstyle{plain}
\newtheorem{theorem}{Theorem}[section]
\newtheorem{lemma}{Lemma}[section]
\newtheorem{thmout}{Theorem}
\theoremstyle{definition}
\theoremstyle{definition}
\newtheorem*{remark}{Remark}
\definecolor{myred}{RGB}{226,56,18}
\definecolor{myorange}{RGB}{228,139,0}
\definecolor{mygreen}{RGB}{4,215,17}
\definecolor{mygrey}{RGB}{180,180,180}
\def\Cx{\mathbb{C}}
\def\dist{\mathrm{dist}}
\def\D{\mathbb{D}}
\begin{document}

\title{\textbf{\textsc{A note on forward iteration of inner functions}}}

\author{Gustavo R.~Ferreira\thanks{Email: \texttt{gustavo.rodrigues-ferreira@open.ac.uk}}\\
  \small{School of Mathematics and Statistics, The Open University}\\
  \small{Milton Keynes, MK7 6AA, UK}
}

\maketitle

\begin{abstract}
A well-known problem in holomorphic dynamics is to obtain Denjoy--Wolff-type results for compositions of self-maps of the unit disc. Here, we tackle the particular case of inner functions: if $f_n:\D\to\D$ are inner functions fixing the origin, we show that a limit function of $f_n\circ\cdots\circ f_1$ is either constant or an inner function. For the special case of Blaschke products, we prove a similar result and show, furthermore, that imposing certain conditions on the speed of convergence guarantees $L^1$ convergence of the boundary extensions. We give a counterexample showing that, without these extra conditions, the boundary extensions may diverge at all points of $\partial\D$.
\end{abstract}


\section{Introduction}
Let $f:\mathbb{D}\to\mathbb{D}$ be a holomorphic function. When taking iterates of $f$, the Denjoy--Wolff theorem -- one of the earliest theorems in complex dynamics -- tells us that, if $f$ is not an elliptic M\"obius transformation, the sequence $(f^n)_{n\in\mathbb{N}}$ converges locally uniformly to a unique constant $c\in\overline{\mathbb{D}}$. A variation of this problem concerns the ``iteration'' of sequences of holomorphic functions $f_n:\mathbb{D}\to\mathbb{D}$, of which there are two flavours:
\[ \text{$F_n := f_n\circ f_{n-1}\circ\cdots\circ f_1$ and $G_n := f_1\circ f_2\circ\cdots\circ f_n$.} \]
The procedure on the left is called \textit{forward iteration}, while the other is called \textit{backward iteration}; sometimes, the names \textit{left} and \textit{right compositions} (respectively) are used. In view of the Denjoy-Wolff theorem, many criteria, pioneered by Lorentzen \cite{Lor90} and Gill \cite{Gil88}, have been found to establish when the limits of forward and backward iteration are constant -- see, for instance, \cite{BCM04}, \cite{CS21}, \cite{AC22}, and \cite[Chapters 11 to 13]{KL07}. For example, Keen and Lakic proved in \cite{KL06} the remarkable result that for any $f:\mathbb{D}\to K$ where $K\subset\D$ is a non-Bloch domain there exists a sequence $f_n:\mathbb{D}\to K$ such that the backward iteration of $(f_n)_{n\in\mathbb{N}}$ converges locally uniformly to $f$. The non-Bloch condition is subtle and geometric in nature, and we will not discuss it further here; see \cite{BCM04} for a definition.

Regarding forward iteration, Benini \textit{et al.} recently gave a more explicit criterion for deciding if the limit function of $f_n\circ \cdots\circ f_1$ is constant  -- under the extra assumption that $f_n(0) = 0$ for all $n\in\mathbb{N}$. More specifically, they showed \cite[Theorem A]{BEFRS19}:
\begin{thmout} \label{thmout:class}
Let $f_n:\D\to\D$ be holomorphic and such that $f_n(0) = 0$ for all $n\in\mathbb{N}$, and assume that the forward iteration of $f_n$ converges locally uniformly to a limit $f:\D\to\D$. Then, $f$ is non-constant if and only if
\begin{equation*}
    \sum_{n\geq 1} (1 - |f_n'(0)|) < +\infty.
\end{equation*}
\end{thmout}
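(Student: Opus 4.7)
The plan is to split the proof into the two directions; the forward implication falls out of the chain rule, while the reverse requires a quantitative strengthening of Schwarz's lemma.

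For the easy direction, suppose $\sum_{n\geq 1}(1-|f_n'(0)|) < +\infty$ and write $\alpha_n := |f_n'(0)|$. The hypothesis forces $\alpha_n \to 1$, and the infinite product $\prod_{n\geq 1}\alpha_n$ then converges to some $L > 0$. The chain rule gives $F_n'(0) = \prod_{k=1}^n f_k'(0)$, so $|F_n'(0)| = \prod_{k=1}^n \alpha_k \to L$. Since $F_n \to f$ locally uniformly, $F_n'(0) \to f'(0)$, so $|f'(0)| = L > 0$ and $f$ is non-constant.

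For the harder direction I argue the contrapositive: assuming $\sum_{n\geq 1}(1-\alpha_n) = +\infty$, I show that $F_n \to 0$ pointwise, which combined with the assumed local uniform convergence forces $f \equiv 0$. The main tool is Dieudonné's refinement of Schwarz's lemma: for every holomorphic self-map $g:\D\to\D$ with $g(0)=0$ and $|g'(0)| = \alpha$,
\[
|g(z)| \leq |z|\cdot\frac{|z|+\alpha}{1+\alpha|z|} \qquad (z \in \D).
\]
This is obtained by applying Schwarz--Pick to $g(z)/z$ (which is a self-map of $\D$ with value $\alpha$ at the origin) and optimising over the resulting M\"obius disk of pseudo-hyperbolic radius $|z|$ centred at $\alpha$.

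Now apply the Dieudonn\'e bound to $f_n$ evaluated at $w = F_{n-1}(z)$, and set $r_n := |F_n(z)|$. The classical Schwarz lemma gives $r_n \leq r_{n-1} \leq \cdots \leq r_0 = |z|$, while Dieudonn\'e yields the recursion
\[
r_n \leq r_{n-1}\cdot\frac{r_{n-1}+\alpha_n}{1+\alpha_n r_{n-1}}.
\]
Using the identity $(r+\alpha)/(1+\alpha r) = 1 - (1-r)(1-\alpha)/(1+\alpha r)$, the bound $1+\alpha r \leq 2$, and $\log(1-x) \leq -x$, together with the monotonicity $r_{k-1} \leq |z|$, this telescopes to
\[
\log r_n \leq \log|z| - \frac{1-|z|}{2}\sum_{k=1}^n (1-\alpha_k).
\]
Divergence of $\sum(1-\alpha_k)$ then sends $r_n \to 0$ for every fixed $z \in \D$, so $f \equiv 0$.

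The main obstacle is the Dieudonn\'e-type estimate: the bare Schwarz inequality $|g(z)| \leq |z|$ is insufficient to produce any decay along the composition, and it is precisely the explicit dependence on $|g'(0)|$ that couples the behaviour of $F_n(z)$ to the series $\sum(1-|f_n'(0)|)$. Once that inequality is in hand, the remainder of the argument is one-line real-variable bookkeeping.
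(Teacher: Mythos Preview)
The paper does not itself prove Theorem~A; it is quoted from \cite[Theorem~A]{BEFRS19} and used as input to Theorem~\ref{thm:inner}. So there is no in-paper proof to compare against. Your argument, however, is correct and essentially reconstructs the one in \cite{BEFRS19}.

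The contrapositive direction is clean: the Dieudonn\'e--Schwarz bound $|g(z)|\le |z|\,(|z|+\alpha)/(1+\alpha|z|)$ is exactly the right refinement of Schwarz, and the telescoped estimate
\[
\log|F_n(z)|\le \log|z|-\tfrac{1-|z|}{2}\sum_{k=1}^n(1-\alpha_k)
\]
forces $F_n(z)\to 0$ whenever the series diverges. It is worth noting that the matching \emph{lower} bound---the mirror image of your Dieudonn\'e inequality---is precisely Lemma~\ref{lem:estimates} of the present paper (also taken from \cite{BEFRS19}) and is what drives the proof of Theorem~\ref{thm:inner}; so your upper bound and the paper's lower bound are two halves of the same Schwarz--Pick estimate on $g(z)/z$.

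One small gap in your ``easy'' direction: you conclude $|f'(0)|=\prod_n\alpha_n>0$, but if some $\alpha_k=0$ (e.g.\ $f_k(z)=z^2$) the product vanishes even though $\sum(1-\alpha_n)$ may be finite and the limit non-constant. The fix is routine: pass to a tail $H_n:=f_n\circ\cdots\circ f_{K+1}$ with $K$ chosen so that $\alpha_n>0$ for $n>K$, run your argument to obtain a non-constant limit $H$, and observe that $f=H\circ F_K$ is then non-constant. (One must also exclude the degenerate case $f_k\equiv 0$ for some $k$, without which the equivalence as literally stated fails; this is harmless in the paper's applications since inner functions are never constant.)
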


Benini \textit{et al.}'s result was motivated by the study of wandering domains in iteration of meromorphic functions. We will not discuss such domains here, but an important property of this scenario is that the induced sequence $(f_n)_{n\in\mathbb{N}}$ of self-maps of $\D$ is entirely made of inner functions (see \cite[Section 3]{BEFRS19} and \cite[Lemma 2.2]{Fer21}). By \textit{inner function}, we mean a holomorphic function $f:\D\to\D$ such that the radial limits
\[ f(e^{i\theta}) := \lim_{r\nearrow 1} f(re^{i\theta}) \]
exist and satisfy $|f(e^{i\theta})| = 1$ for Lebesgue-a.e. $\theta\in [0, 2\pi)$.

A composition of two inner functions yields again an inner function -- see, for example, \cite[Theorem 2]{Ryf66}. Our primary aim here is to extend this result to limits of forward iteration of inner functions. Notice that knowing that locally uniform convergence takes place is not enough; in fact, as shown by Carath\'eodory \cite{Car54}, \textit{any} holomorphic function $f:\D\to\D$ can be locally uniformly approximated by finite Blaschke products (which are, of course, inner functions). Nevertheless, the fact that forward iteration is primarily a composition allows us to prove the following.
\begin{theorem} \label{thm:inner}
Let $f_n:\D\to\D$, $n\in\mathbb{N}$, be inner functions such that $f_n(0) = 0$ for all $n\in\mathbb{N}$, and assume that $F_n := f_n\circ\cdots\circ f_1$ converges locally uniformly to some $F:\D\to\D$. Then, $F$ is either constant or an inner function, and the latter happens if and only if
\[ \sum_{n\geq 1} (1 - |f_n'(0)|) < +\infty. \]
\end{theorem}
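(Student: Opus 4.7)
Theorem~A already supplies the equivalence between $F$ being non-constant and the summability condition $\sum(1-|f_n'(0)|) < \infty$, so the genuinely new content of Theorem~\ref{thm:inner} is the dichotomy itself: a non-constant $F$ must in fact be inner. My plan is to establish this by proving $\|F\|_{H^2} = 1$, which combined with $|F^*|\le 1$ a.e.\ forces $|F^*|=1$ a.e.

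Assume $\sum(1-|f_n'(0)|)<\infty$. For each $k$, Theorem~A applied to the shifted sequence $(f_{k+1}, f_{k+2}, \ldots)$ guarantees that the tail iteration $h_{n,k} := f_n\circ\cdots\circ f_{k+1}$ converges locally uniformly as $n\to\infty$ to a non-constant $h_k:\D\to\D$; passing to the limit in $F_n = h_{n,k}\circ F_k$ gives the decomposition $F = h_k\circ F_k$. Since $F_k$ is a finite composition of inner functions fixing $0$, it is inner, and L\"owner's lemma then says that the boundary map $F_k^*$ pushes normalized Lebesgue measure $m$ on $\partial\D$ forward to itself. Combined with the Ryff-type composition formula $(h_k\circ F_k)^* = h_k^*\circ F_k^*$ almost everywhere, this yields
\[
\|F\|_{H^2}^2 = \int_{\partial\D}|h_k^*\circ F_k^*|^2\,dm = \int_{\partial\D}|h_k^*|^2\,dm = \|h_k\|_{H^2}^2, \qquad \text{for every } k.
\]

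To close the argument I will show that $h_k\to\mathrm{id}_\D$ locally uniformly, which will force $\|F\|_{H^2}^2=1$. From $h_k\circ F_k = F$ and $F_k'(0) = \prod_{j\le k}f_j'(0)\to F'(0)\ne 0$ (the non-vanishing coming from Theorem~A), one gets $h_k'(0) = F'(0)/F_k'(0)\to 1$. The quotient $g_k(z) := h_k(z)/z$ is a holomorphic self-map of $\D$ (by Schwarz applied to $h_k$) with $g_k(0) = h_k'(0)\to 1$, so normal families together with the maximum principle force every subsequential limit to be identically $1$; hence $g_k\to 1$ and $h_k\to\mathrm{id}_\D$ locally uniformly. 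For each $r\in(0,1)$ we then have
\[
\|F\|_{H^2}^2 = \|h_k\|_{H^2}^2 \ge \int_{\partial\D}|h_k(r\zeta)|^2\,dm(\zeta),
\]
and the right-hand side tends to $r^2$ as $k\to\infty$ by the local uniform convergence; letting $r\to 1$ yields $\|F\|_{H^2}^2\ge 1$, so (with $|F^*|\le 1$ a.e.\ for the reverse inequality) $\|F\|_{H^2}^2 = 1$, and $F$ is inner. The main delicacy I foresee is the boundary-composition identity $(h_k\circ F_k)^* = h_k^*\circ F_k^*$ a.e., whose justification requires $F_k^*$ to avoid the measure-zero singular set of $h_k^*$; this is precisely what L\"owner's lemma delivers, via the invariance $(F_k^*)_\ast m = m$.
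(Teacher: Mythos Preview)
Your proof is correct and takes a genuinely different route from the paper's. Both arguments begin with the same tail decomposition $F = h_k\circ F_k$ (the paper writes $H$ for your $h_N$), but diverge thereafter. The paper proceeds pointwise: it uses the Schwarz-type lower bound of Lemma~\ref{lem:estimates} to show $|h_N(z)|\ge 1-3\epsilon^{1/2}$ on the circle $|z|=1-\epsilon^{1/2}$, pulls this back through $F_N$ to the level set $\Gamma_\epsilon=\{|F_N|=1-\epsilon^{1/2}\}$, argues via the intermediate value theorem that the angular projection of $\Gamma_\epsilon$ has full measure and decreases with $\epsilon$, and concludes $\limsup_{r\to 1}|F(re^{i\theta})|=1$ almost everywhere. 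Your argument is norm-based: L\"owner's lemma together with the Ryff composition identity gives $\|F\|_{H^2}=\|h_k\|_{H^2}$ for every $k$, and then $h_k\to\mathrm{id}_{\D}$ forces $\|F\|_{H^2}=1$. Your route is cleaner and more conceptual---the measure-preservation of $\hat F_k$ entirely replaces the paper's level-set geometry---at the price of invoking $H^p$ machinery (Lindel\"of's theorem underlies the composition identity), whereas the paper's argument stays within elementary Schwarz-type estimates and real analysis.

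One small correction: Theorem~A does not itself deliver convergence of the tail iterates $h_{n,k}$; its hypothesis \emph{is} convergence. What you actually need is the standard observation that any two subsequential (Montel) limits $h,\tilde h$ of $(h_{n,k})_n$ satisfy $h\circ F_k=F=\tilde h\circ F_k$ on $\D$, and since $F_k(\D)$ is a nonempty open set the identity theorem gives $h=\tilde h$; normality then yields convergence of the full sequence. This is routine, but should replace the appeal to Theorem~A at that step.
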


After proving Theorem \ref{thm:inner} in Section \ref{sec:inner}, we turn to the special case of Blaschke products -- see \cite{Dur70} and \cite{GMR16} for more properties of such functions. The composition of two \textit{finite} Blaschke products is once again a Blaschke product, but that is \textit{not} true in general for infinite Blaschke products -- it is true for a specific class of Blaschke products called \textit{indestructible} Blaschke products; see \cite{McL72} and \cite{KR13}. In view of that, the following result is of some interest.
\begin{theorem} \label{thm:blaschke}
Let $b_n:\D\to\D$ be finite Blaschke products such that $b_n(0) = 0$ for all $n\in\mathbb{N}$, and assume that $B_n := b_n\circ\cdots\circ b_1$ converges locally uniformly to some $B:\D\to\D$. Then, $B$ is either constant or a Blaschke product.
\end{theorem}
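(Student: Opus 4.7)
The plan is to reduce the problem to Theorem \ref{thm:inner} combined with the question of triviality of the singular inner factor of $B$, and then to exploit the composition structure of forward iteration to rule out a nontrivial such factor. By Theorem \ref{thm:inner} (finite Blaschke products being inner), $B$ is either constant --- in which case we are done --- or inner, with $\sum_n(1-|b_n'(0)|)<\infty$. Assume the latter. By the chain rule and convergence of the infinite product, $|B'(0)| = \prod_n|b_n'(0)| > 0$, so $0$ is a simple zero of $B$. Write the Riesz factorization $B = z\cdot B_0\cdot S$, with $B_0$ the Blaschke product on the non-zero zeros $\{\beta_j\}$ of $B$ and $S$ the singular inner factor. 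Since $|S(0)| = |B'(0)|/|B_0(0)| = |B'(0)|/\prod_j|\beta_j|$, proving that $B$ is a Blaschke product reduces to showing $|B'(0)| = \prod_j|\beta_j|$.

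Each $B_n$ is a finite Blaschke product, so $|B_n'(0)| = \prod_k|\alpha_{n,k}|$ with the product over its non-zero zeros; and by Hurwitz's theorem these zeros converge inside any compact $K\subset\D$, with multiplicity, to those of $B$. Splitting $\prod_k|\alpha_{n,k}|$ according to whether $|\alpha_{n,k}|<R$ or $\geq R$, and passing first $n\to\infty$ and then $R\nearrow 1$, reduces the task to proving the ``escaping-zero product'' $\lim_{R\to 1}\lim_{n\to\infty}\prod_{|\alpha_{n,k}|\ge R}|\alpha_{n,k}|$ equals $1$. To control it, I would use the composition structure: every zero $\alpha$ of $B_n$ arises, for some $m\le n$, as a preimage under $B_{m-1}$ of a non-zero zero $a_{m,j}$ of $b_m$; and since $B_{m-1}(0)=0$, Schwarz's lemma forces $|\alpha|\ge|a_{m,j}|$. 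Combined with $\sum_m\sum_j(1-|a_{m,j}|)\le\sum_m(-\log|b_m'(0)|)<\infty$, this confines escaping zeros to boundary layers of controlled total mass.

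The main obstacle is making the limit exchange rigorous: by Carath\'eodory's approximation theorem, a locally uniform limit of finite Blaschke products need not be Blaschke, so the composition hypothesis must be used essentially, not merely the finite-Blaschke property of each $B_n$. I expect that a quantitative propagation of Schwarz's lemma down the preimage tree --- most plausibly via Littlewood's subordination principle applied to the subharmonic function $\log|b_m|$ along the composition chain --- will yield an estimate on $\sum_{|\alpha_{n,k}|\ge R}(-\log|\alpha_{n,k}|)$ that is uniform in $n$ and controlled by a tail of $\sum_m(1-|b_m'(0)|)$, forcing the escaping-zero product to equal $1$ and thereby completing the proof.
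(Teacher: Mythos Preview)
Your proposal has a genuine gap at the ``escaping-zero product'' step, which you yourself flag as the main obstacle; the Littlewood-subordination route you sketch is speculative and not carried out. The missing observation is far simpler than the machinery you propose: because $b_{n+1}(0)=0$, every zero of $B_n$ is automatically a zero of $B_{n+1}=b_{n+1}\circ B_n$, so $Z(B_n)\subset Z(B_{n+1})$ for all $n$. The zeros of $B_n$ therefore do not move as $n$ grows --- they only accumulate --- and combined with Hurwitz's theorem this yields $Z(B)=\bigcup_n Z(B_n)$ exactly, not merely as a limit. Hence there are no escaping zeros at all: for each $n$ the nonzero zeros $\{\alpha_{n,k}\}$ of $B_n$ form a finite subset of the nonzero zeros $\{\beta_j\}$ of $B$, and $|B_n'(0)|=\prod_k|\alpha_{n,k}|$ is a finite partial product of $\prod_j|\beta_j|$. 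Passing to the limit gives $|B'(0)|=\prod_j|\beta_j|$ immediately, and your Riesz-factorization argument then concludes.

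The paper's proof is more direct still and bypasses both Theorem~\ref{thm:inner} and the Riesz factorization. From the nesting $Z(B_n)\subset Z(B_{n+1})$ it follows that $B_{n+1}/B_n$ is (up to a unimodular constant) a finite Blaschke product over $Z(B_{n+1})\setminus Z(B_n)$; thus $(B_n)$ is literally the sequence of partial products of the infinite Blaschke product over $\bigcup_n Z(B_n)=Z(B)$, and convergence to a Blaschke product follows from the classical convergence theorem once $Z(B)$ satisfies the Blaschke condition --- which it does, since $B$ is bounded and nonconstant. Your detour through the singular factor is valid in principle, but the composition hypothesis enters precisely through the nesting of zeros, not through any subordination estimate.
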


After that is established, we move on to consider the boundary extensions of forward iterations of Blaschke products. As mentioned previously, an inner function $f$ admits radial limits almost everywhere on $\mathbb{T} = \partial\D$. This means that one can define the \textit{boundary extension} of $f$ to be
\[ \hat{f}(e^{i\theta}) := \lim_{r\nearrow 1} f(re^{i\theta}); \]
the hat notation here serves to emphasise that we should think of $\hat{f}$ as a function $\hat{f}:\mathbb{T}\to\Cx$ belonging to $L^p(\mathbb{T})$ for $p\in[1, +\infty]$; see \cite{Dur70} and \cite[Chapter 17]{Rud87} for more on $H^p$ spaces and their relation to $L^p(\mathbb{T})$. We drop the $\mathbb{T}$ from this notation, since we will not be talking about any other $L^p$ spaces. Of course, since $f$ is an inner function, one has $|\hat{f}(e^{i\theta})| = 1$ almost everywhere, so that in fact one can think of $\hat{f}$ as a measurable self-map of $\mathbb{T}$, defined up to a set of measure zero. In our setting, this means that the forward iteration of the inner functions $(f_n)_{n\in\mathbb{N}}$ induces the forward iteration of $(\hat{f}_n)_{n\in\mathbb{N}}$ on $\mathbb{T}$. A natural question then is: if the functions $F_n := f_n\circ\cdots\circ f_1$ converge locally uniformly to $F$, do the boundary extensions $\hat{F}_n := \hat{f}_n\circ\cdots \hat{f}_1$ converge (in any sense) to $\hat{F}$? We give sufficient conditions for a positive answer:
\begin{theorem} \label{thm:boundary}
Let $b_n:\D\to\D$ be finite Blaschke products such that $b_n(0) = 0$, $b_n'(0) > 0$ and $d_n := \deg b_n - 1 < M < +\infty$ for all $n\in\mathbb{N}$. Assume that
\begin{equation} \label{eq:FC}
    \sum_{n\geq 1} (1 - b_n'(0))\log\frac{1}{1 - b_n'(0)} < +\infty.
\end{equation}
Then, the functions $B_n := b_n\circ\cdots\circ b_1$ converge locally uniformly in $\D$ to a Blaschke product~$B$, and the boundary extensions of $\hat{B}_n$ converge to $\hat{B}$ in $L^1$.
\end{theorem}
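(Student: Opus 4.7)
My plan is to prove the stronger statement that $\hat B_n\to\hat B$ in $L^2(\mathbb{T})$, from which the desired $L^1$ convergence follows immediately by Cauchy--Schwarz (since $\mathbb{T}$ has finite measure). Writing $m$ for the normalised Lebesgue measure on $\mathbb{T}$ and using that $|\hat B_n|=|\hat B|=1$ almost everywhere,
\[
\|\hat B_n-\hat B\|_{L^2}^2 = 2-2\operatorname{Re}\int_{\mathbb{T}}\hat B_n\,\overline{\hat B}\,dm,
\]
so the task is to show $\int_{\mathbb{T}}\hat B_n\,\overline{\hat B}\,dm\to 1$ as $n\to\infty$.

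The first step is to factor $B=g_n\circ B_n$ for each $n$, with $g_n$ inner. Once $B_n\to B$ is known to hold with $B$ non-constant, Theorem~\ref{thmout:class} yields $\sum(1-b_k'(0))<+\infty$, so that the tail product $c_n:=\prod_{k>n}b_k'(0)$ converges to $1$. A Montel argument (any two subsequential limits must coincide after composition with the surjective finite Blaschke product $B_n$) identifies a locally uniform limit $g_n:\D\to\D$ of $b_{n+m}\circ\cdots\circ b_{n+1}$ as $m\to\infty$; Theorem~\ref{thm:blaschke} applied to the tail sequence $(b_{n+k})_{k\geq 1}$ then ensures $g_n$ is itself a Blaschke product (non-constant since $g_n'(0)=c_n>0$), hence inner. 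Passing to the limit in $B_{n+m}=(b_{n+m}\circ\cdots\circ b_{n+1})\circ B_n$ yields $B=g_n\circ B_n$.

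The core computation rests on two classical properties of inner functions: (a) if $f$ is inner with $f(0)=0$, then the boundary map $\hat f$ preserves $m$ on $\mathbb{T}$; and (b) $\widehat{f\circ g}=\hat f\circ\hat g$ almost everywhere on $\mathbb{T}$ whenever $f,g$ are inner. By (b), $\hat B=\hat g_n\circ\hat B_n$ a.e., so $\hat B_n\,\overline{\hat B}$ has the form $\phi_n(\hat B_n)$ with $\phi_n(w):=w\,\overline{\hat g_n(w)}$; applying (a) to $B_n$,
\[
\int_{\mathbb{T}}\hat B_n\,\overline{\hat B}\,dm=\int_{\mathbb{T}}w\,\overline{\hat g_n(w)}\,dm(w).
\]
Expanding $\hat g_n(w)=\sum_{k\geq 1}c_k w^k$ as a Fourier series on $\mathbb{T}$ and invoking orthonormality of $\{w^k\}_{k\in\mathbb{Z}}$ in $L^2(\mathbb{T},m)$, the right-hand side collapses to $\overline{c_1}=\overline{g_n'(0)}=c_n\to 1$. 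Hence $\|\hat B_n-\hat B\|_{L^2}^2=2(1-c_n)\to 0$, as needed.

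The main obstacle is justifying the composition identity (b) and assembling the factorisation $B=g_n\circ B_n$ with $g_n$ inner. Property (a) is a standard consequence of the mean value property applied to Poisson integrals of continuous functions on $\mathbb{T}$; property (b) is a classical result on boundary behaviour of inner functions, in the spirit of the composition theorem in~\cite{Ryf66}. Once these tools are in place, the whole $L^2$ estimate reduces to the short Fourier-coefficient computation above; the separate preliminary task of showing that $B_n$ actually converges locally uniformly to a non-constant Blaschke product is presumably where the bounded-degree hypothesis and the sharper sum \eqref{eq:FC} play their role.
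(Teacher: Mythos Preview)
Your argument is correct and genuinely different from the paper's. The paper shows that $(\hat B_n)$ is Cauchy in $L^1$ by bounding $|\arg(\hat B_{n+m}/\hat B_n)|$ pointwise as a sum of arctangent terms over the Blaschke factors and then applying Rybkin's integral estimates; it is precisely at this step that the degree bound $d_n<M$ and the logarithmic Frostman condition~\eqref{eq:FC} are used. Your route replaces all of this by the single identity $\int_{\mathbb T}\hat B_n\,\overline{\hat B}\,dm=\overline{g_n'(0)}$, obtained from measure preservation (which the paper also invokes) together with orthogonality in $L^2$. This is both shorter and yields the stronger conclusion $\|\hat B_n-\hat B\|_{L^2}^2=2\bigl(1-\prod_{k>n}b_k'(0)\bigr)$.

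One correction to your closing paragraph: the bounded-degree hypothesis and the Frostman condition are \emph{not} needed for the ``preliminary'' locally uniform convergence of $B_n$ either. In fact your own computation, applied with the finite tail $H^{(n)}_m:=b_{n+m}\circ\cdots\circ b_{n+1}$ in place of $g_n$ (so that all boundary compositions are trivially justified, every map being a finite Blaschke product), gives directly
\[
\|\hat B_{n+m}-\hat B_n\|_{L^2}^2 \;=\; 2\Bigl(1-\prod_{k=n+1}^{n+m} b_k'(0)\Bigr),
\]
and this tends to $0$ uniformly in $m$ as $n\to\infty$ as soon as $\sum_k(1-b_k'(0))<+\infty$, a consequence of~\eqref{eq:FC}. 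Thus $(\hat B_n)$ is Cauchy in $L^2$, hence converges there; $H^2$ convergence then forces locally uniform convergence in~$\D$, after which Theorem~\ref{thm:blaschke} identifies the limit as a Blaschke product and your $g_n$-argument (or simply passing to the limit in $m$) identifies the $L^2$ limit as $\hat B$. So your method is self-contained and actually proves the theorem under the weaker hypothesis $\sum_n(1-b_n'(0))<+\infty$, with neither the degree bound nor the logarithmic factor required.
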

\begin{remark}
The condition $b_n'(0) > 0$ is stated only to guarantee that the sequence $(B_n)_{n\in\mathbb{N}}$ converge to a \textit{unique} limit. We could do without it, as long as we assume that the sequence is converging to a unique limit.
\end{remark}

The summability condition (\ref{eq:FC}) was first introduced by Frostman \cite{Fro42} to study the boundary convergence of infinite Blaschke products, and hence we will call it the \textit{Frostman condition}. Rybkin \cite{Ryb91} too investigated this and other ``Frostman conditions''; his results are closely related to ours, but more related to $L^p$ convergence of boundary extensions of infinite Blaschke products. Linden \cite{Lin76} showed that one can construct an infinite Blaschke product for which this condition fails and that diverges everywhere on $\mathbb{T}$. In the same spirit, we construct a sequence $(b_n)_{n\in\mathbb{N}}$ for which the Frostman condition fails and the boundary extensions of its forward iteration diverges everywhere on $\mathbb{T}$.
\begin{theorem} \label{thm:ex}
Let $(r_n)_{n\in\mathbb{N}}$ be a sequence in $(0, 1)$ such that
\[ \text{$\sum_{n\geq 1} (1 - r_n) < +\infty$\quad but\quad $\sum_{n\geq 1} (1 - r_n)\log\frac{1}{1 - r_n} = +\infty$.} \]
Let
\[ \theta_n := \sum_{i=1}^n (1 - r_i)\log\frac{1}{1 - r_i}, \]
let $z_n := r_ne^{i\theta_n}$, and define
\[ b_n(z) = z\cdot\frac{|z_n|}{z_n}\frac{z_n - z}{1 - \overline{z_n}z}. \]
Then, $B_n := b_n\circ\cdots\circ b_1$ converges locally uniformly in $\D$ to an infinite Blaschke product $B$, but the boundary extensions $\hat{B}_n$ do not converge to $\hat{B}$ at any point of $\mathbb{T}$.
\end{theorem}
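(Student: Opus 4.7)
The convergence part follows quickly from the earlier theorems in the paper. Each $b_n$ is a finite Blaschke product of degree~$2$ with $b_n(0) = 0$ and $b_n'(0) = r_n$. Since $\sum(1-r_n) < +\infty$, Theorem~\ref{thm:blaschke} yields $B_n \to B$ locally uniformly to a Blaschke product $B$; to see that $B$ is \emph{infinite}, I would use that $b_k(z_k) = 0$ together with $b_j(0) = 0$ for $j > k$, which gives $B_n(z_k) = 0$ for every $n \geq k$ and hence $B(z_k) = 0$ for every $k$. The $z_k$ are pairwise distinct because the arguments $\theta_k$ are strictly increasing, so $B$ has infinitely many zeros.

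The bulk of the work lies in analyzing $\hat{B}_n$ on $\mathbb{T}$. A short calculation gives $b_n(e^{i\phi}) = e^{i\phi} \cdot (r_n - e^{i(\phi-\theta_n)})/(1 - r_n e^{i(\phi-\theta_n)})$; the Möbius factor is unimodular and, for $r_n$ close to $1$, behaves like $1 - i(1-r_n)\cot((\phi-\theta_n)/2) + O((1-r_n)^2)$ on any compact subset of $\mathbb{T}\setminus\{e^{i\theta_n}\}$. Thus $b_n$ is essentially the identity outside a ``transition arc'' of angular width $\asymp 1 - r_n$ around $e^{i\theta_n}$, inside which it can rotate arguments by an $O(1)$ amount, and outside which it rotates arguments by the small quantity $-(1-r_n)\cot((\phi-\theta_n)/2)$.

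Setting $\alpha_n := \arg \hat{B}_n(e^{it})$ and iterating, one obtains modulo $2\pi$ and up to an error summable in $n$,
\begin{equation*}
    \alpha_n \;\equiv\; t \;-\; \sum_{k=1}^n (1 - r_k)\cot\!\left(\tfrac{\alpha_{k-1} - \theta_k}{2}\right).
\end{equation*}
The specific choice $\theta_k - \theta_{k-1} = (1-r_k)\log\tfrac{1}{1-r_k}$ is designed to produce a resonance: the spiral $\theta_k$ winds densely around $\mathbb{T}$ because the series $\sum(1-r_k)\log\tfrac{1}{1-r_k}$ diverges while its summands tend to zero, and the resulting interaction between $(\alpha_{k-1})$ and $(\theta_k)$ forces the partial sum above to behave badly, mirroring the failure of the Frostman condition for the zero sequence $\{z_k\}$.

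The main obstacle is verifying this last claim at \emph{every} $t$, not merely almost every $t$. The strategy I would pursue parallels Linden's construction~\cite{Lin76}: one carefully estimates the contribution of the ``transition events'' where $\alpha_{k-1} \approx \theta_k \pmod{2\pi}$, and shows that, regardless of the choice of $t$, the precise imbalance between $\sum(1-r_k) < +\infty$ and $\sum(1-r_k)\log\tfrac{1}{1-r_k} = +\infty$ produces a divergence incompatible with convergence to $\arg \hat{B}(e^{it})$. This uniform-in-$t$ control is what distinguishes a genuine counterexample from the almost-everywhere phenomena implicit in Theorem~\ref{thm:boundary}, and is where the engineered choice of the phases $\theta_k$ does all its work.
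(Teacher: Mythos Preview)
Your argument that $B_n(z_k) = 0$ for $n \geq k$ is incorrect: you have the composition order backwards. Since $B_n = b_n \circ \cdots \circ b_1$, the map $b_1$ is applied \emph{first}, and there is no reason for $B_{k-1}(z_k)$ to lie in $\{0, z_k\} = b_k^{-1}(0)$. (Your reasoning would be valid for the backward composition $b_1 \circ \cdots \circ b_n$.) The conclusion that $B$ is infinite still holds, because $\deg B_n = 2^n$ and Lemma~\ref{lem:zeros} gives $Z(B) = \bigcup_n Z(B_n)$.

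More substantively, your route to boundary divergence differs from the paper's. You track the orbit $\alpha_n = \arg \hat{B}_n(e^{it})$ recursively, producing a sum of $n$ terms whose $k$-th summand involves the \emph{moving} angle $\alpha_{k-1}$. Linden's argument in~\cite{Lin76}, however, is about a static sum $\sum_k \arctan(\cdots)$ in which both the boundary angle and the zeros are fixed; porting his estimates to your setting would require controlling $\alpha_{k-1} - \theta_k$, which is itself determined by the earlier dynamics and not obviously tractable. The paper instead uses the product decomposition from the proof of Theorem~\ref{thm:blaschke}: each $B_n$ is itself a finite Blaschke product, so one has the exact identity
\[
\arg \hat{B}_n(e^{i\theta}) \;=\; \theta - 2\sum_{i=1}^{D_n} \arctan\!\left(\frac{1 - |w_i|}{(1 + |w_i|)\tan\!\left(\tfrac{\theta - \arg w_i}{2}\right)}\right),
\]
the sum running over all zeros $w_i$ of $B_n$. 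This is precisely the form Linden treats, so his argument transfers without the dynamical complication. The trade-off is that the paper's sum has $2^n - 1$ terms over zeros that are preimages of the chosen $z_k$ under earlier $B_j$, rather than the $z_k$ themselves; both outlines leave genuine work to the reader, but the paper's aligns with~\cite{Lin76} structurally, whereas yours does not.
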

The proof of Theorem \ref{thm:ex} is very similar to that of \cite[Theorem]{Lin76}. As such, we will provide only a brief outline at the end of Subsection \ref{ssec:boundary}.

\noindent\textsc{Acknowledgements.} I would like to thank my supervisors, Phil Rippon and Gwyneth Stallard, for their encouragement, comments, and suggestions -- especially regarding Theorem~\ref{thm:inner}.

\section{Forward iteration of inner functions} \label{sec:inner}
In this section, we use estimates from \cite[Corollary 2.4 and Theorem 2.5]{BEFRS19} to prove Theorem~\ref{thm:inner}. More specifically, we have:
\begin{lemma} \label{lem:estimates}
Let $g:\D\to\D$ be a holomorphic function with $g(0) = 0$ and $|g'(0)| = \lambda$. Then, for $z\in\D$ such that $|z| \leq \lambda$,
\[ |g(z)| \geq |z|\left(1 - \mu\cdot\frac{1 + |z|}{1 - |z|}\right), \]
where $\mu = 1 - \lambda$.
\end{lemma}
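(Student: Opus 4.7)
The plan is to reduce everything to the Schwarz--Pick inequality. Since $g(0)=0$ and $g$ maps $\D$ to $\D$, the Schwarz lemma gives $|g(z)|\leq |z|$, so the function $h(z):=g(z)/z$ extends holomorphically to $\D$ as a self-map of $\overline{\D}$ (in fact of $\D$, unless $g$ is a rotation, in which case the lemma is trivial). Note that $h(0)=g'(0)$, so $|h(0)|=\lambda$. Thus the whole problem becomes a lower bound on $|h(z)|$ for the self-map $h:\D\to\D$ with $|h(0)|=\lambda$.

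For the main estimate, I would apply Schwarz--Pick to $h$ with base point $0$. Setting $a := h(0)$, this gives
\[
\left|\frac{h(z)-a}{1-\overline{a}\,h(z)}\right|\leq |z|,
\]
i.e., $h(z)$ lies in the closed pseudohyperbolic disc of radius $|z|$ around $a$. Since that disc is a Euclidean disc whose point of smallest modulus lies at distance $(|a|-|z|)/(1-|a||z|)$ from the origin (valid as long as $|z|\leq |a|=\lambda$), we obtain
\[
|h(z)|\geq \frac{\lambda-|z|}{1-\lambda|z|}\qquad\text{for } |z|\leq\lambda.
\]
Multiplying through by $|z|$ yields $|g(z)|\geq |z|(\lambda-|z|)/(1-\lambda|z|)$.

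The last step is a short algebraic rearrangement matching the form in the statement. A direct computation gives the identity
\[
\frac{\lambda-|z|}{1-\lambda|z|} \;=\; 1 - \frac{\mu(1+|z|)}{1-\lambda|z|},
\]
where $\mu=1-\lambda$. Since $\lambda\leq 1$, we have $1-\lambda|z|\geq 1-|z|>0$, so the subtracted term is \emph{increased} if we replace $1-\lambda|z|$ by $1-|z|$; hence
\[
\frac{\lambda-|z|}{1-\lambda|z|}\;\geq\; 1 - \mu\cdot\frac{1+|z|}{1-|z|},
\]
and multiplying by $|z|$ finishes the proof. The only real content is the Schwarz--Pick step; the rest is bookkeeping, and I expect no genuine obstacle.
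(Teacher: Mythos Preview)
Your argument is correct: the reduction to $h(z)=g(z)/z$, the Schwarz--Pick bound $|h(z)|\geq(\lambda-|z|)/(1-\lambda|z|)$ for $|z|\leq\lambda$, the algebraic identity, and the final monotonicity step all check out. The paper does not actually prove this lemma; it simply quotes the estimate from \cite[Corollary~2.4 and Theorem~2.5]{BEFRS19}, so there is no in-paper argument to compare against. Your direct Schwarz--Pick derivation is self-contained and is in fact the natural route to such a bound (and presumably close to what the cited reference does); it has the advantage of making the paper independent of that citation for this step.
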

\begin{proof}[Proof of Theorem \ref{thm:inner}]
Let us assume that $F = \lim_{n\to +\infty} F_n$ is non-constant. It is clear from Weiestrass's theorem that $F$ is holomorphic; we must prove the stronger fact that it is an inner function. By Theorem \ref{thmout:class}, we have
\[ \sum_{n\geq 1} (1 - |f_n'(0)|) < +\infty; \]
in other words, $\prod_{n\geq 1} |f_n'(0)|$ converges to a positive number, and so given $\epsilon > 0$ there exists $N = N(\epsilon)$ such that
\[ \nu := \prod_{n=N+1}^{+\infty} |f_n'(0)| > 1 - \epsilon. \]
Now, for $n > N$, let
\[ H_n(z) := f_n\circ\cdots\circ f_{N+1}; \]
by Montel's theorem, there exists a subsequence $(H_{n_k})_{k\in\mathbb{N}}$ converging locally uniformly to some $H:\D\to\D$, and by Schwarz's lemma we in fact have $|H_n|\to |H|$. It follows that $|H'(0)| = \nu$, so that by Lemma \ref{lem:estimates} we have
\[ |H(z)| \geq |z|\left(1 - (1 - \nu)\frac{1 + |z|}{1 - |z|}\right) \geq |z|\left(1 - \epsilon\cdot\frac{1 + |z|}{1 - |z|}\right) \]
for $|z| < \nu$. Applying this to $|z| = 1 - \epsilon^{1/2} < 1 - \epsilon < \nu$, we get
\[ |H(z)| \geq (1 - \epsilon^{1/2})\left(1 - \epsilon\cdot\frac{2 - \epsilon^{1/2}}{\epsilon^{1/2}}\right) = (1 - \epsilon^{1/2})(1 - 2\epsilon^{1/2} + \epsilon), \]
and since $\epsilon > 0$ this gives
\begin{equation} \label{eq:H}
    |H(z)| \geq 1 - 3\epsilon^{1/2}.
\end{equation}
Define now the set $\Gamma_\epsilon := \{z : |F_N(z)| = 1 - \epsilon^{1/2}\}$; then, by (\ref{eq:H}),
\begin{equation} \label{eq:HGN}
    \text{$|F(z)| = |H\circ F_N(z)| \geq 1 - 3\epsilon^{1/2}$ for $z\in \Gamma_\epsilon$.}
\end{equation}
Furthermore, by Schwarz's lemma,
\begin{equation} \label{eq:Ge}
    \Gamma_\epsilon \subset \{z : 1 - \epsilon^{1/2} < |z| < 1\}.
\end{equation}
It is not true in general that for any inner function $f:\D\to\D$ fixing the origin the set $\{z : |f(z)| = r\}$ surrounds the circle $\{z : |z| = r\}$, even for $r$ very close to one; think, for instance, of the inner function $z\mapsto z\cdot\exp\left((z - 1)/(z + 1)\right)$. Nevertheless, we can show that $\Theta_\epsilon := \{\arg z : z\in\Gamma_\epsilon\}$ always has full measure in $[0, 2\pi)$, and that $\epsilon' < \epsilon$ implies $\Theta_\epsilon'\subset \Theta_\epsilon$.

Indeed, since $F_N$ is a composition of finitely many inner functions, it is itself an inner function, so that the set
\[ I_N := \{\theta\in[0, 2\pi) : \lim_{r\nearrow 1} |F_N(re^{i\theta})| = 1\} \]
has full Lebesgue measure. Applying the intermediate value theorem to the function $r\mapsto |F_N(re^{i\theta})|$, we see that for $\theta\in I_N$ there is always some $r^*\in (0, 1)$ such that $|F_N(r^*e^{i\theta})| = 1 - \epsilon^{1/2}$, and hence $I_N\subset \Theta_\epsilon$. Since $I_N$ has full measure in $[0, 2\pi)$, so does $\Theta_\epsilon$, as claimed. Now, if $\epsilon' < \epsilon$, we can assume that $N' = N(\epsilon') \geq N = N(\epsilon)$, so that by Schwarz's lemma $|F_{N'}(z)| \leq |F_N(z)|$ for all $z\in\mathbb{D}$. For $z' = r'e^{i\theta'}\in\Gamma_{\epsilon'}$, this means that $|F_N(z')| \geq |F_{N'}(z')| = 1 - \epsilon^{1/2}$, meaning that we can once again apply the intermediate value theorem and conclude that $\theta'\in \Theta_\epsilon$, and therefore $\Theta_{\epsilon'}\subset \Theta_\epsilon$.

Taking now a sequence $\epsilon_k \searrow 0$, we see that the set
\[ \Theta := \bigcap_{k\geq 1} \Theta_{\epsilon_k} \]
has full Lebesgue measure in $[0, 2\pi)$, and for $\theta\in\Theta$ we have by (\ref{eq:HGN}) and (\ref{eq:Ge}) that
\[ \limsup_{r\nearrow 1} |F(re^{i\theta})| = 1. \]
By Fatou's theorem, $F$ has well-defined radial limits in a full-measure set $E\subset [0, 2\pi)$, and the set $E\cap\Theta$ has full Lebesgue measure. For $\theta\in E\cap\Theta$, we must therefore have $|F(e^{i\theta})| = 1$; it follows that $F$ is an inner function.
\end{proof}

\section{The special case of Blaschke products}
Here, we prove Theorems \ref{thm:blaschke} and \ref{thm:boundary}.
\subsection{Convergence to a Blaschke product}
To prove Theorem \ref{thm:blaschke}, we must pay attention to the zeros of $B$ and $B_n$. Denoting by $Z(f)$ the set of zeros of the holomorphic function $f$, we have:
\begin{lemma} \label{lem:zeros}
Under the conditions of Theorem \ref{thm:blaschke} with $B$ non-constant, we have
\begin{equation*}
    \text{$Z(B_n)\subset Z(B_{n+1})$, $n\in\mathbb{N}$, and $Z(B) = \bigcup_{n\geq 1} Z(B_n)$.}
\end{equation*}
In particular,
\begin{equation} \label{eq:BC}
    \sum_{z\in\bigcup_n Z(B_n)} (1 - |z|) < +\infty.
\end{equation}
\end{lemma}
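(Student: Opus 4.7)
The plan is to build the three claims in order, leaning on the composition structure $B_{n+1} = b_{n+1}\circ B_n$ and the Hurwitz theorem. The first inclusion $Z(B_n)\subset Z(B_{n+1})$ is immediate: if $B_n(z)=0$, then $B_{n+1}(z)=b_{n+1}(B_n(z))=b_{n+1}(0)=0$ because $b_{n+1}(0)=0$. This gives one inclusion in the second claim for free: if $z\in Z(B_n)$ for some $n$, then $z\in Z(B_m)$ for every $m\geq n$, so $B(z)=\lim_m B_m(z)=0$, hence $\bigcup_n Z(B_n)\subset Z(B)$.

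For the reverse inclusion $Z(B)\subset\bigcup_n Z(B_n)$, I would fix $z_0\in Z(B)$ with multiplicity $k$ and choose a small closed disc $\overline{D}(z_0,r)\subset\D$ in which $z_0$ is the only zero of $B$. Since $B$ is non-constant and $B_n\to B$ locally uniformly, Hurwitz's theorem gives an $N_0$ such that, for every $n\geq N_0$, $B_n$ has exactly $k$ zeros in $D(z_0,r)$ (counted with multiplicity). Combine this with the monotonicity from the first claim: $Z(B_n)\cap D(z_0,r)\subset Z(B_{n+1})\cap D(z_0,r)$, both sides having the same cardinality with multiplicity, so the intersection stabilises to some fixed set $S$ of $k$ points. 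Any $w\in S$ satisfies $B_n(w)=0$ for all large $n$, hence $B(w)=0$, and since $z_0$ is the unique zero of $B$ in the disc we must have $w=z_0$. In particular, $z_0\in Z(B_{N_0})\subset\bigcup_n Z(B_n)$.

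For the summability (\ref{eq:BC}), I would simply invoke the classical Blaschke condition: the limit $B$ is a bounded holomorphic function on $\D$ (since $|B_n|\leq 1$ for every finite Blaschke product $B_n$, so $|B|\leq 1$), and it is non-constant by hypothesis, so its zero set satisfies $\sum_{z\in Z(B)}(1-|z|)<+\infty$. Combined with $Z(B)=\bigcup_n Z(B_n)$ established above, this is exactly (\ref{eq:BC}).

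The only nontrivial step is the reverse inclusion in the second claim, where one has to prevent ``new'' zeros of $B_n$ near $z_0$ from escaping before becoming zeros of some $B_m$. The Hurwitz count pins down the total multiplicity in the disc, and the monotonicity $Z(B_n)\subset Z(B_{n+1})$ then forces the configuration of zeros near $z_0$ to freeze, which together with $B(w)=\lim_n B_n(w)$ identifies each frozen zero with $z_0$ itself. Everything else is either automatic from the composition with $b_{n+1}(0)=0$ or a direct appeal to standard $H^\infty$ theory.
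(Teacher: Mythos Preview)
Your proof is correct and follows the same overall strategy as the paper: monotonicity from $b_{n+1}(0)=0$, Hurwitz's theorem for the reverse inclusion, and the classical Blaschke condition for the sum. The paper's finish for the reverse inclusion is slightly quicker than your stabilisation argument: once Hurwitz produces a zero $z_n\in D$ of $B_n$, the already-established inclusion $Z(B_n)\subset\bigcup_m Z(B_m)\subset Z(B)$ forces $z_n=z^*$ immediately, so the multiplicity count and freezing step are not needed.
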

\begin{proof}
Since $b_{n+1}(0) = 0$ and $B_{n+1} = b_{n+1}\circ B_n$, it follows that $Z(B_n)\subset Z(B_{n+1})$, whence
\begin{equation} \label{eq:zeros}
    Z(B)\supset \bigcup_{n\geq 1} Z(B_n).
\end{equation}

To prove the reverse inclusion, take a zero $z^*$ of $B$. Since $B$ is holomorphic and non-constant, there exists a small disc $D = \{z : |z - z^*| < r\}$ such that $\overline{D}\subset\D$ and $\overline{D}\cap Z(B) = \{z^*\}$. By Hurwitz's theorem, $B_n$ also has a zero $z_n$ in $D$ for all large $n$; by (\ref{eq:zeros}), we must have $z_n = z^*$, proving the reverse inclusion.

Finally, (\ref{eq:BC}) follows from the fact that $B$ is a bounded non-constant function, and thus $Z(B)$ satisfies the Blaschke condition by \cite[Theorem 2.3]{Dur70}.
\end{proof}

We are ready to prove Theorem \ref{thm:blaschke}.
\begin{proof}[Proof of Theorem \ref{thm:blaschke}]
The first thing to notice is that $B_n$, being a finite composition of finite Blaschke products, is a finite Blaschke product. The degree $D_n$ of $B_n$ is given by
\[ D_n = \prod_{i=1}^n \deg b_i, \]
and it follows that we can decompose $B_n$ as
\[ B_n(z) = \alpha_n\cdot z\cdot \prod_{i=1}^{D_n} \frac{|z_i|}{z}\frac{z_i - z}{1 - \overline{z_i}z}, \]
where $|\alpha_n| = 1$ and $z_i$ runs over the zeros of $B_n$ excluding the origin. Since $Z(B_{n+1})\supset Z(B_n)$ by Lemma \ref{lem:zeros}, we have
\[ B_{n+1}(z) = \frac{\alpha_{n+1}}{\alpha_n}\left(\prod_{i=1}^{D_{n+1} - D_n} \frac{|z_i|}{z_i}\frac{z_i - z}{1 - \overline{z_i}z}\right)B_n(z), \]
where $z_i$ runs over $Z(B_{n+1})\setminus Z(B_n)$. In other words, the sequence $(B_n)_{n\in\mathbb{N}}$ represents a multiplicative sequence of Blaschke factors, which converges locally uniformly to an infinite Blaschke product by (\ref{eq:BC}) and \cite[Theorem 2.4]{Dur70}. This infinite Blaschke product is $B$, and we are done.
\end{proof}
\begin{remark}
It is clear from the proof that Theorem \ref{thm:blaschke} can be extended to forward iteration of indestructible Blaschke products. A natural question is whether the limit is itself indestructible.
\end{remark}

\subsection{Convergence of boundary extensions} \label{ssec:boundary}
First, we show that if a forward iterated sequence of inner functions converges to a non-zero limit, then there is always \textit{some} ``good behaviour'' of the boundary extensions.
\begin{lemma} \label{lem:id}
Let $f_n:\D\to\D$ be inner functions such that $f_n(0) = 0$ and $f_n'(0) > 0$ for all $n\in\mathbb{N}$. If $F_n := f_n\circ\cdots\circ f_1$ converges locally uniformly to a non-constant function $F:\D\to\D$, then
\[ \frac{\hat{f}_n(e^{i\theta})}{e^{i\theta}} \to 1 \]
for Lebesgue-a.e. $\theta\in[0, 2\pi)$.
\end{lemma}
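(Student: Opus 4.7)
The plan is to set $g_n(\theta) := \hat{f}_n(e^{i\theta})e^{-i\theta}$ and show that $g_n \to 1$ in $L^2$ fast enough that we get a.e.\ convergence by Borel--Cantelli / monotone convergence. Since $F$ is non-constant and all $f_n$ fix the origin, Theorem~\ref{thmout:class} tells us that $\sum_{n\geq 1}(1 - f_n'(0)) < +\infty$ (using $f_n'(0) > 0$). This is the key quantitative input.

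Next I would compute the first Fourier coefficient of $\hat f_n$. Since $f_n$ is bounded holomorphic with $f_n(0) = 0$ and Taylor expansion $f_n(z) = f_n'(0) z + \cdots$, the standard $H^2$ identification gives
\[ \frac{1}{2\pi}\int_0^{2\pi} \hat{f}_n(e^{i\theta})\,e^{-i\theta}\,d\theta \;=\; f_n'(0). \]
Because $f_n$ is inner, $|\hat f_n| = 1$ a.e., and hence $|g_n(\theta)| = 1$ a.e. Expanding,
\[ \|g_n - 1\|_{L^2}^2 \;=\; \frac{1}{2\pi}\int_0^{2\pi}\!\bigl(|g_n|^2 - 2\,\mathrm{Re}\,g_n + 1\bigr)\,d\theta \;=\; 2\bigl(1 - f_n'(0)\bigr), \]
where we use that $f_n'(0)$ is real and positive.

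Summing over $n$ gives $\sum_n \|g_n - 1\|_{L^2}^2 < +\infty$, so by monotone convergence the function $\sum_n |g_n - 1|^2$ is integrable on $\mathbb{T}$, hence finite for a.e.\ $\theta$; in particular $g_n(\theta) \to 1$ for a.e.\ $\theta \in [0, 2\pi)$, which is the statement of the lemma.

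There isn't really a hard step here — the only thing one must be a bit careful about is the Fourier-coefficient computation, specifically the fact that for a bounded holomorphic $f$ with $f(0) = 0$ the boundary function $\hat f$ lies in $H^2 \subset L^2(\mathbb{T})$ and has the claimed first coefficient; this is classical and can be cited from \cite[Chapter 17]{Rud87}. The rest is just the algebraic identity $\||g|-1\|^2 = 2(1-\mathrm{Re}\!\int g)$ combined with the summability of $1 - f_n'(0)$ coming from Theorem~\ref{thmout:class}.
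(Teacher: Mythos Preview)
Your proof is correct and takes a genuinely different route from the paper's. The paper defines $\psi_n(z) := f_n(z)/z$ (extended holomorphically to $0$), observes via Schwarz's lemma that $\psi_n:\D\to\D$ with $\psi_n(0) = f_n'(0)$, and then invokes two external results from \cite{BEFRS22} (their Theorems~A and~B on the Denjoy--Wolff set for holomorphic sequences) to conclude that $\psi_n(e^{i\theta})\to 1$ almost everywhere. By contrast, you bypass this machinery entirely: the identity $\|g_n - 1\|_{L^2}^2 = 2(1 - f_n'(0))$, valid because $f_n$ is inner and the first Taylor coefficient of $f_n$ equals $f_n'(0)$, immediately gives $\sum_n \|g_n - 1\|_{L^2}^2 < +\infty$ from Theorem~\ref{thmout:class}, and then monotone convergence finishes the job. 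Your argument is more elementary and entirely self-contained (needing only the standard $H^2$ identification of Taylor and Fourier coefficients), whereas the paper's approach situates the lemma within the broader framework of \cite{BEFRS22}. One small typographical slip: in your final line you wrote $\||g|-1\|^2$ where you meant $\|g-1\|^2$; the computation itself is fine.
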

\begin{proof}
For $n\in\mathbb{N}$, define the functions
\[ \psi_n(z) := \begin{cases} \frac{f_n(z)}{z}, & z\in\D\setminus\{0\}, \\ f_n'(0), & z = 0. \end{cases} \]
By the removable singularity theorem, these functions are holomorphic in $\D$, and by Schwarz's lemma they actually satisfy $\psi_n:\D\to\D$. Furthermore, since $F_n\to F\not\equiv 0$, we also have
\[ \sum_{n\geq 1} (1 - \psi_n(0)) = \sum_{n\geq 1} (1 - f_n'(0)) < +\infty \]
by Theorem \ref{thmout:class}. It then follows from \cite[Theorem B]{BEFRS22} that
\[ \text{$|\psi_n(e^{i\theta}) - \psi_n(z)|\to 0$ for $z\in\D$ as $n\to+\infty$} \]
for Lebesgue-a.e. $\theta\in[0, 2\pi)$. Since $\psi_n(0)\to 1$ as $n\to +\infty$, it now follows from \cite[Theorem A]{BEFRS22} that $\psi_n(z)\to 1$ for all $z\in\mathbb{D}$, and by the triangle inequality $\psi_n(e^{i\theta})\to 1$ for Lebesgue-a.e.~$\theta$.
\end{proof}

Lemma \ref{lem:id} shows that, if $f_n$ are inner functions fixing the origin such that $f_n'(0) > 0$ and $(F_n)$ is a semi-contracting sequence (see \cite[Theorem 7.2]{BEFRS22} for this terminology), then $\hat{f}_n$ converges to the identity Lebesgue almost everywhere, which \textit{might} imply that some kind of convergence happens for $\hat{F}_n$. Of course, our point here is that things are not so simple; nevertheless, we offer a sufficient condition for that to be the case, thus proving Theorem \ref{thm:boundary}.
\begin{proof}[Proof of Theorem \ref{thm:boundary}]
Let
\[ \psi_n(e^{i\theta}) := \frac{\hat{B}_n(e^{i\theta})}{e^{i\theta}}; \]
we will show that $(\psi_n)_{n\in\mathbb{N}}$ is a Cauchy sequence in $L^1$. As $H^1$ is a closed subspace of $L^1$, which is a Banach space, this in turn implies that $(\hat{B}_n)_{n\in\mathbb{N}}$ converges in $L^1$, and our theorem will follow from the fact that convergence of the boundary extensions in $L^1$ implies locally uniform convergence in $\D$ (see \cite[Theorem 3.3]{Dur70} and the following corollary, or \cite[Remark 17.8(c) and Theorem 17.11]{Rud87}).

To that end, let $\dist(e^{i\theta}, e^{i\phi})$ denote distance in $\mathbb{T}$, so that
\[ \dist(\psi_n(e^{i\theta}), \psi_{n+m}(e^{i\theta})) = \left|\arg\frac{\hat{B}_{n+m}(e^{i\theta})}{\hat{B}_n(e^{i\theta})}\right| \]
for all $m, n\in\mathbb{N}$, where $\arg z\in [-\pi, \pi)$. We rewrite the right-hand quotient as
\[ \frac{\hat{B}_{n+m}(e^{i\theta})}{\hat{B}_n(e^{i\theta})} = \prod_{i=1}^m \frac{\hat{B}_{n+i}(e^{i\theta})}{\hat{B}_{n+i-1}(e^{i\theta})} = \prod_{i=1}^m \frac{\hat{b}_{n+i}\left(\hat{B}_{n+i-1}(e^{i\theta})\right)}{\hat{B}_{n+i-1}(e^{i\theta})}, \]
so that
\begin{equation} \label{eq:ineqC}
    \dist(\psi_{n+m}(e^{i\theta}), \psi_n(e^{i\theta})) = \left|\arg\prod_{i=1}^m\frac{\hat{b}_{n+i}\left(\hat{B}_{n+i-1}(e^{i\theta})\right)}{\hat{B}_{n+i-1}(e^{i\theta})}\right| \leq \sum_{i=1}^m \left|\arg\frac{\hat{b}_{n+i}\left(\hat{B}_{n+i-1}(e^{i\theta})\right)}{\hat{B}_{n+i-1}(e^{i\theta})}\right|.
\end{equation}
In order to relate this to $L^1$, we note that,
\[ |e^{i\theta} - e^{i\phi}| \leq \dist(e^{i\theta}, e^{i\phi}) \]
for all $\theta, \phi\in[0, 2\pi)$, meaning that (\ref{eq:ineqC}) implies
\[ \|\psi_{n+m} - \psi_n\|_1 = \frac{1}{2\pi}\int_0^{2\pi} |\psi_{n+m}(e^{i\theta}) - \psi_n(e^{i\theta})|\,d\theta \leq \sum_{i=1}^m\frac{1}{2\pi}\int_0^{2\pi}\left|\arg\frac{\hat{b}_{n+i}\left(\hat{B}_{n+i-1}(e^{i\theta})\right)}{\hat{B}_{n+i-1}(e^{i\theta})}\right|\,d\theta. \]
Our next step is to recall that, as shown by Doering and Ma\~n\'e \cite[Corollary 1.5]{DM91}, inner functions fixing the origin preserve the (normalised) Lebesgue measure on $\mathbb{T}$ in the sense that $\mu\circ \hat{f}^{-1} = \mu$ (for $\mu$ being the normalised Lebesgue measure and $\hat{f}$ the boundary extension of an inner function fixing the origin), and so the inequality above can be rewritten as
\begin{equation} \label{eq:argint}
    \|\psi_{n+m} - \psi_n\|_1 \leq \sum_{i=1}^m\frac{1}{2\pi}\int_0^{2\pi}\left|\arg\frac{\hat{b}_{n+i}\left(e^{i\theta}\right)}{e^{i\theta}}\right|\,d\theta.
\end{equation}
Since we assume the functions $b_n$ to be finite Blaschke products of degree $d_n + 1$, we can decompose them as
\[ b_n(z) = z\prod_{i=1}^{d_n}\frac{|z_i|}{z_i}\frac{z_i - z}{1 - \overline{z_i}z}, \]
where $z_i = z_i(n)$, $1\leq i\leq d_n$, are the non-zero zeros of $b_n$. This gives us the following identity (see \cite[Theorem 2.4]{GMR16} or \cite[Equation 2.1]{Ryb91}):
\[ \arg\frac{\hat{b}_n(e^{i\theta})}{e^{i\theta}} = -2\sum_{i=1}^{d_n} \arctan\left(\frac{1 - |z_i|}{(1 + |z_i|)\tan\left(\frac{\theta - \theta_i}{2}\right)}\right), \]
where $\theta_i = \arg z_i$ and $\theta\neq\theta_i$ for $1\leq i \leq d_n$, and $\arctan$ denotes the principal branch of the arctangent (and, in particular, has range $(-\pi/2, \pi/2)$). Inserting this identity into (\ref{eq:argint}) and applying the triangle inequality yields
\[ \|\psi_{n+m} - \psi_n\|_1 \leq \sum_{i=1}^m\sum_{j=1}^{d_{n+i}}\frac{1}{2\pi}\int_0^{2\pi}\left|2\arctan\left(\frac{1 - |z_j|}{(1 + |z_j|)\tan\left(\frac{\theta - \theta_j}{2}\right)}\right)\right|\,d\theta. \]
A direct application of the estimates given by Rybkin for the integral on the right-hand side \cite[Proof of Theorem 3]{Ryb91} gives
\[ \|\psi_{n+m} - \psi_n\|_1 \leq \sum_{i=1}^m \sum_{j=1}^{d_{n+i}} \left((2 + \log 2\pi^2)(1 - |z_j|) + 2(1 - |z_j|)\log\frac{1}{1 - |z_j|}\right). \]
Rearranging the sums and denoting the smallest non-zero zero of $b_n$ by $z_n^*$, we obtain
\[ \|\psi_{n+m} - \psi_n\|_1 \leq (2 + \log 2\pi^2)M\sum_{i=1}^m (1 - |z_n^*|) + 2M\sum_{i=1}^m (1 - |z_n^*|)\log\frac{1}{1 - |z_n^*|}, \]
and we see that we can make the sum on the right-hand side arbitrarily small by taking a large enough $n$, since $|z_n^*| \geq b_n'(0)$ by \cite[Corollary 2.4]{BEFRS19} and the sequence $(b_n'(0))_{n\in\mathbb{N}}$ satisfies the Frostman condition (\ref{eq:FC}) by hypothesis. It follows that $(\psi_n)_{n\in\mathbb{N}}$ is a Cauchy sequence in $L^1$, and we are done.
\end{proof}
\begin{remark}
As an alternative proof of Theorem \ref{thm:boundary}, we could start from the ``product decomposition'' used in the proof of Theorem \ref{thm:blaschke} and attempt to apply \cite[Theorem 3]{Ryb91}. However, unlike in the proof of Theorem \ref{thm:blaschke}, it is not straightforward to show that the zeros of $B_n$ collectively satisfy the Frostman condition.
\end{remark}

Finally, we outline the proof of Theorem \ref{thm:ex}. Most of the necessary tools were already introduced in the proofs of Theorem \ref{thm:blaschke} and \ref{thm:boundary}; the rest is due to Linden \cite{Lin76}.
\begin{proof}[Outline of the proof of Theorem \ref{thm:ex}]
The compositions $B_n := b_n\circ\cdots\circ b_1$ are all finite Blaschke products, and as such can be rewritten as a product
\[ B_n(z) = \alpha_n\cdot z\cdot \prod_{i=1}^{D_n} \frac{|z_i|}{z_i}\frac{z_i - z}{1 - \overline{z_i}z} \]
in the spirit of the proof of Theorem \ref{thm:blaschke}. Therefore, recalling the proof of Theorem \ref{thm:boundary}, its boundary extension satisfies
\[ \arg \hat{B}_n(e^{i\theta}) = \theta - 2\sum_{i=1}^{D_n} \arctan\left(\frac{1 - |z_i|}{(1 + |z_i|)\tan\left(\frac{\theta - \theta_i}{2}\right)}\right); \]
due to our careful choices of $z_i$, the same arguments in the proof of Theorem \cite[Theorem]{Lin76} show that $(\hat{B}_n(e^{i\theta}))_{n\in\mathbb{N}}$ is not a Cauchy sequence at any point of $\mathbb{T}$.
\end{proof}

\end{document}